\documentclass[a4paper, 12pt]{amsart}
\usepackage{graphicx}
\usepackage{amsmath,amssymb}

\vfuzz2pt 
\hfuzz2pt 

\newtheorem{thm}{Theorem}[section]

\newtheorem{lem}[thm]{Lemma}
\newtheorem{prop}[thm]{Proposition}

\theoremstyle{definition}

\theoremstyle{remark}
\newtheorem{rem}[thm]{Remark}
\theoremstyle{example}

\theoremstyle{conjecture}

\numberwithin{equation}{section}


\def\dlim{\displaystyle\lim}

\newcommand{\CC}{{\mathbb C}}

\newcommand{\calF}{{\mathcal F}}

\newcommand{\calO}{{\mathcal O}}



\begin{document}

\title[Differences of weighted composition operators]{Differences of weighted composition operators between the Fock spaces}

\author{Pham Trong Tien$^1$ \& Le Hai Khoi$^2$}%
\address{(Tien) Department of Mathematics, Mechanics and Information Technology, Hanoi University of Science, VNU, 334 Nguyen Trai, Hanoi, Vietnam}%
\email{phamtien@mail.ru, phamtien@vnu.edu.vn}

\address{(Khoi) Division of Mathematical Sciences, School of Physical and Mathematical Sciences, Nanyang Technological University (NTU),
637371 Singapore}%
\email{lhkhoi@ntu.edu.sg}

\thanks{$^1$ Supported in part by NAFOSTED grant No. 101.02-2017.313. \\
\indent $^2$ Supported in part by MOE's AcRF Tier 1 M4011724.110 (RG128/16)}

\subjclass[2010]{47 B33, 46E15}%

\keywords{Fock space, weighted composition operator, essential norm, differences}%

\date{\today}%



\begin{abstract}
We study some important topological properties such as boundedness, compactness and essential norm of differences of weighted composition operators between Fock spaces.
\end{abstract}




\maketitle

\section{Introduction}

Composition operators and weighted composition operators have been extensively investigated on various Banach spaces of holomorphic functions in the unit disc or the unit ball during the past several decades (see \cite{CM-95,S-93} and references therein for more information). 
Later, much progress was made in the study of such operators on Fock spaces of entire functions (see, for instance, \cite{CMS-03, TL-14, U-07}). 
There is a great number of topics on such types of operators: boundedness and compactness, topological structure, dynamical and ergodic properties, etc. In recent years, the study of differences of (weighted) composition operators acting on various Banach spaces of holomorphic functions on the unit disk or the unit ball has been received more attention. In details, many related results have been obtained on Hardy spaces \cite{SS-90}, Bergman spaces \cite{M-05}, Bloch spaces \cite{H-09, HO-07}, spaces of bounded holomorphic fucntions $H^{\infty}$ \cite{HI-05}, weighted Banach spaces with sup-norm \cite{M-08}, and even from Bloch spaces to spaces $H^{\infty}$\cite{HO-11,SZ-13}. Contrary to the context of the above-mentioned spaces, the similar question on Fock spaces is quite different and has been studied only for composition operators (more precisely, for linear sums of two composition operators) on Hilbert Fock spaces \cite{CIK-10}. 

The aim of this paper is to extend this problem for weighted composition operators between different Fock spaces.

For a number $p \in (0,\infty)$, the Fock space $\calF^p$ is defined as follows
$$
\calF^p: = \left\{ f \in \calO(\CC): \|f\|_{p}: = \left( \dfrac{p}{2\pi} \int_{\CC} |f(z)|^p e^{- \frac{p |z|^2}{2}}dA(z) \right)^{1/p} < \infty \right\},
$$
where $\calO(\CC)$ is the space of entire functions on $\CC$ with the usual compact open topology and $dA$ is the Lebesgue measure on $\CC$. 
Furthermore, the space $\calF^{\infty}$ consists of all entire functions $f \in \calO(\CC)$ for which 
$$
\|f\|_{\infty}: = \sup_{z \in \CC}|f(z)|e^{-\frac{|z|^2}{2}} < \infty.
$$
It is well known that $\calF^p$ with $ 1 \leq p \leq \infty$ is a Banach space. When $0 < p < 1$, $\calF^p$ is a complete metric space with the distance $d(f,g): = \|f-g\|_{p}^p$.

For each $w \in \CC$, we define the functions
$$
K_{w}(z): = e^{\overline{w} z} \text{ and } k_{w}(z): = e^{\overline{w} z - \frac{|w|^2}{2}}, z \in \CC.
$$
Then, $\|K_{w}\|_{p} = e^{\frac{|w|^2}{2}}$ and $\|k_{w}\|_{p} = 1$ for all $ 0 < p \leq \infty$ and $w \in \CC$; and $k_w$ converges to $0$ in the space $\calO(\CC)$ as $|w| \to \infty$.

We refer the reader to monograph \cite{KZ} for more details about Fock spaces. The following two auxiliary results will be used in the sequel.
\begin{lem}\label{Fp}
Let $p \in (0,\infty)$. For every $f \in \calF^p$ and $z \in \CC$,
$$
|f(z)| \leq e^{\frac{|z|^2}{2}} \|f\|_p.
$$
\end{lem}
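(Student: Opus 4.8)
The plan is to exploit the subharmonicity of $|f|^p$ together with the explicit Gaussian weight defining $\|f\|_p$. First I would fix $f \in \calF^p$ and a point $z \in \CC$, and recall that since $f$ is entire, the function $|f|^p$ is subharmonic on $\CC$ for every $p > 0$. Hence, for any radius $r > 0$, the sub-mean-value inequality gives
$$
|f(z)|^p \leq \frac{1}{\pi r^2} \int_{|\ze - z| < r} |f(\ze)|^p \, dA(\ze).
$$
The point of the argument is that the right-hand side can be bounded by a constant multiple of $\|f\|_p^p$ once we reinstate the Gaussian factor $e^{-p|\ze|^2/2}$ inside the integral, so the main quantitative work is to track how large $e^{p|\ze|^2/2}$ can be on the disc $|\ze - z| < r$.

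The key steps, in order, are: (i) on the disc $\{|\ze - z| < r\}$ one has $|\ze| \le |z| + r$, hence $e^{p|\ze|^2/2} \le e^{p(|z|+r)^2/2}$; (ii) insert this bound into the mean-value inequality to get
$$
|f(z)|^p \leq \frac{e^{p(|z|+r)^2/2}}{\pi r^2} \int_{|\ze - z| < r} |f(\ze)|^p e^{-p|\ze|^2/2} \, dA(\ze) \leq \frac{e^{p(|z|+r)^2/2}}{\pi r^2} \cdot \frac{2\pi}{p} \, \|f\|_p^p;
$$
(iii) take $p$-th roots and then optimize (or simply choose a convenient value of) $r > 0$. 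A clean choice is to let $r \to 0$ is not available since the prefactor blows up; instead one picks $r$ to make the constant manageable — in fact the sharp statement $|f(z)| \le e^{|z|^2/2}\|f\|_p$ with constant exactly $1$ suggests letting $r \to \infty$ after a more careful estimate, or appealing directly to the known pointwise estimate for Fock spaces. So in practice I would either cite the standard estimate from \cite{KZ}, or refine step (i): replace the crude bound $|\ze| \le |z| + r$ by integrating in polar coordinates centered at $z$ and using rotation to reduce to a one-dimensional Gaussian integral, which yields the constant $1$ after letting $r \to \infty$.

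The main obstacle is getting the constant to be exactly $1$ rather than some larger $C(p)$. The subharmonicity argument with a fixed disc only gives $|f(z)| \le C(p) e^{|z|^2/2}\|f\|_p$ for some constant depending on $p$; to sharpen this to constant $1$ one should instead expand $f(z) = \sum_n a_n z^n$, compute $\|f\|_p$ (for $p = 2$ this is immediate via orthogonality of monomials, and for general $p$ one uses the dilation/norm relations together with $\|K_w\|_p = e^{|w|^2/2}$), and recognize that $|f(z)| = |\inner{f, K_z}|$ in the reproducing-kernel Hilbert space case, so that $|f(z)| \le \|f\|_2 \|K_z\|_2 = e^{|z|^2/2}\|f\|_2$ by Cauchy--Schwarz; the general $p$ case then follows since $\calF^p \subset \calF^\infty$ with the stated sharp constant, a fact recorded in \cite{KZ}. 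Accordingly, the shortest rigorous route is to invoke this reproducing-kernel identity for $p=2$ and the standard inclusion estimates for the remaining $p$, which is the approach I would write up.
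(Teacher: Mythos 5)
The paper does not actually prove this lemma: it records it and refers to \cite[Corollary 2.8]{KZ}, so your fallback of ``cite the standard estimate from \cite{KZ}'' is literally what the authors do, and is fine. The problem is with the self-contained argument you offer as the ``shortest rigorous route'': it is circular for $p\neq 2$. The ``standard inclusion $\calF^p\subset\calF^\infty$ with the stated sharp constant'' is exactly the inequality $|f(z)|\le e^{|z|^2/2}\|f\|_p$ being proved, so you cannot invoke it to finish; and the reproducing-kernel/Cauchy--Schwarz step genuinely covers only $p=2$, since for other $p$ there is no inner product and no identity $|f(z)|=|\inner{f,K_z}|$ to exploit. Likewise, your subharmonicity-on-a-disc estimate, as you yourself note, only yields a constant $C(p)>1$, and letting $r\to\infty$ in polar coordinates centered at $z$ does not work directly: the Gaussian weight $e^{-p|\ze|^2/2}$ is centered at the origin, so its restriction to the circle $|\ze-z|=r$ is not constant in $\theta$ and cannot be pulled out of the circular mean.

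The missing idea that produces the constant $1$ is to recenter the Gaussian \emph{holomorphically} before applying subharmonicity. Fix $z$ and set $h(w):=f(z+w)\,e^{-\overline{z}w}$. Then $h$ is entire, $|h(0)|=|f(z)|$, and the pointwise identity $|h(w)|e^{-|w|^2/2}=|f(z+w)|e^{-|z+w|^2/2}\,e^{|z|^2/2}$ together with translation invariance of $dA$ gives $\|h\|_p=e^{|z|^2/2}\|f\|_p$. Now integrate the circular sub-mean-value inequality $\frac{1}{2\pi}\int_0^{2\pi}|h(re^{i\theta})|^p\,d\theta\ge|h(0)|^p$ against the radial density $p\,e^{-pr^2/2}\,r\,dr$, which has total mass $1$; this yields $|h(0)|^p\le\|h\|_p^p$, i.e. $|f(z)|\le e^{|z|^2/2}\|f\|_p$, with constant exactly $1$ and for all $p\in(0,\infty)$ simultaneously. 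With this one substitution your subharmonicity plan closes; without it (or without simply resting on the citation, as the paper does), the write-up you propose does not.
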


\begin{lem}\label{lem-com}
Let $p, q \in (1,\infty)$. If the operator $T: \mathcal F^p \to \mathcal F^q$ is compact, then for every sequence $(w_n)_n$ in $\CC$ with $\dlim_{n \to \infty} |w_n| = \infty$, the sequence $(Tk_{w_n})_n$ converges to $0$ in $\mathcal F^q$.
\end{lem}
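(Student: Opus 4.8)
The plan is to prove Lemma \ref{lem-com} by exploiting the fact that the normalized kernels $k_{w_n}$ converge to $0$ in $\calO(\CC)$, combined with a standard weak-convergence argument valid in the reflexive Banach spaces $\calF^p$ with $1 < p < \infty$. First I would show that $(k_{w_n})_n$ converges weakly to $0$ in $\calF^p$. Since $\|k_{w_n}\|_p = 1$ for all $n$, the sequence is norm-bounded; because $\calF^p$ is reflexive for $p \in (1,\infty)$, every subsequence has a further subsequence converging weakly to some $g \in \calF^p$. Using Lemma \ref{Fp}, norm-bounded sets in $\calF^p$ are locally uniformly bounded families of entire functions, hence normal; so weak convergence of a subsequence forces pointwise convergence (test against the bounded point-evaluation functionals $f \mapsto f(z)$, which lie in $(\calF^p)^*$), and the pointwise limit must agree with the $\calO(\CC)$-limit, namely $0$. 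Thus $g = 0$, and since every subsequence has a further subsequence converging weakly to $0$, the whole sequence $(k_{w_n})_n$ converges weakly to $0$ in $\calF^p$.

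Next I would invoke the defining property of compact operators between Banach spaces: a compact operator $T : \calF^p \to \calF^q$ maps weakly convergent sequences to norm convergent sequences. Applying this to $k_{w_n} \to 0$ weakly gives $\|T k_{w_n}\|_q \to 0$, which is exactly the conclusion. To keep the argument self-contained one could instead run it by subsequences: given any subsequence of $(T k_{w_n})_n$, compactness of $T$ together with boundedness of $(k_{w_n})_n$ yields a further subsequence along which $T k_{w_n}$ converges in $\calF^q$-norm to some $h$; weak convergence of $k_{w_n}$ to $0$ forces $T k_{w_n} \to 0$ weakly, hence the norm limit $h$ equals $0$; since every subsequence has a further subsequence converging in norm to $0$, the full sequence $(T k_{w_n})_n$ converges to $0$ in $\calF^q$.

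The main obstacle — or rather the one point that genuinely needs the hypothesis $p, q \in (1,\infty)$ and deserves care — is establishing the weak convergence $k_{w_n} \rightharpoonup 0$; this is where reflexivity of $\calF^p$ and the boundedness of point evaluations (Lemma \ref{Fp}) are both used. Everything after that is the soft, general functional-analytic fact that compact operators turn weak convergence into norm convergence, so no real computation is required. It is worth noting that the restriction $p > 1$ (rather than merely completeness, which holds for $p \geq 1$) is what makes reflexivity available; the case $p = 1$ would need a separate argument and is presumably excluded here for exactly this reason.
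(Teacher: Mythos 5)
Your proof is correct. The paper does not reprove this lemma (it cites \cite[Lemma 2.4]{TK-17}), but your argument is the standard one used there: reflexivity of $\calF^p$ for $1<p<\infty$ together with the bounded point evaluations of Lemma \ref{Fp} gives $k_{w_n}\rightharpoonup 0$ weakly, and a compact operator sends weakly null sequences to norm-null sequences.
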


Note that the proofs of Lemmas \ref{Fp} and \ref{lem-com} were given in \cite[Corollary 2.8]{KZ} and \cite[Lemma 2.4]{TK-17}, respectively.

Let $\psi, \varphi$ be entire functions on $\CC$. The \textit{weighted composition operator} $W_{\psi, \varphi}$ is defined by $W_{\psi, \varphi}f = \psi (f \circ \varphi)$. When the function $\psi$ is identically $1$, the operator $W_{\psi, \varphi}$ reduces to the \textit{composition operator} $C_{\varphi}$.
As in \cite{TL-14, TK-17}, the following quantities play an important role in the present paper:
$$
m_z(\psi, \varphi): = |\psi(z)| e^{\frac{|\varphi(z)|^2 - |z|^2}{2}}, z \in \CC,
$$
and
$$
m(\psi,\varphi): = \sup_{z \in \CC} m_z(\psi, \varphi).
$$
We also put
$$
\rho(w_1,w_2):= \dfrac{|w_1 - w_2|^2}{2(2+|w_1 - w_2|^2)} \text{ for } w_1, w_2 \in \mathbb{C}.
$$

The topological properties such as boundedness, compactness and essential norm for a single weighted composition operator $W_{\psi, \varphi}$ acting between different Fock spaces were systematically studied in \cite[Section~3]{TK-17}. Hereby, for the reader's convenience we summarize the main results of this section which will be used in this paper.

\begin{thm} \label{thm-swco}
The following statements are true.
\begin{itemize}
\item[(i)]
For every $p, q \in (0,\infty)$, if the operator $W_{\psi, \varphi}: \calF^p \to \calF^q$ is bounded, then $\psi \in \calF^q$ and $m(\psi, \varphi) < \infty$. In this case, $\varphi(z) = az + b$ with $|a| \leq 1$. Moreover, if $a = 0$, i.e., $\varphi(z) = b$ and $\psi \in \calF^q$, then $W_{\psi, \varphi}: \calF^p \to \calF^q$ is compact.
\item[(ii)] Let $\psi$ be a nonzero function in $\calF^q$ and $\varphi(z) = az + b$ with $0 < |a| \leq 1$.

- In the case $ 0 < p \leq q < \infty$, the operator $W_{\psi, \varphi}: \calF^p \to \calF^q$ is bounded (or compact) if and only if $m(\psi, \varphi) < \infty$ (or, respectively, $\dlim_{|z| \to \infty} m_z(\psi, \varphi) = 0$). 

- In the case $0 < q < p < \infty$, the operator $W_{\psi, \varphi}: \calF^p \to \calF^q$ is bounded (or, equivalently, compact) if and only if $m_z(\psi, \varphi) \in L^{\frac{pq}{p-q}}(\mathbb C, dA)$. In particular, the operator $C_{\varphi}: \calF^p \to \calF^q$ is bounded (or, equivalently, compact) if and only if $|a| < 1$.
\end{itemize}
\end{thm}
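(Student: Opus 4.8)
The plan is to treat the three groups of assertions in turn, using throughout the unit vectors $k_w$, the pointwise bound of Lemma~\ref{Fp}, and --- once $\varphi$ is known to be affine --- the change of variables $u=\varphi(z)$.

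For the necessary conditions in~(i): since $\mathbf 1\in\calF^p$, boundedness gives $\psi=W_{\psi,\varphi}\mathbf 1\in\calF^q$; and evaluating $W_{\psi,\varphi}$ on the unit vector $k_{\varphi(z)}$ and using Lemma~\ref{Fp} at the point $z$,
$$
m_z(\psi,\varphi)=\abs{(W_{\psi,\varphi}k_{\varphi(z)})(z)}\,e^{-|z|^2/2}\le\norm{W_{\psi,\varphi}k_{\varphi(z)}}_q\le\norm{W_{\psi,\varphi}},
$$
so $m(\psi,\varphi)\le\norm{W_{\psi,\varphi}}<\infty$. To see that $\varphi$ must be affine (one may assume $\psi\not\equiv0$, the other case being trivial), I would introduce the entire function $H:=\psi\,e^{\varphi^2/2}$ and check, splitting into the cases $\re(\varphi(z)^2)\le0$ and $\re(\varphi(z)^2)>0$ and using $\psi\in\calF^q$ in the first and $m(\psi,\varphi)<\infty$ in the second, that $|H(z)|\le C\,e^{|z|^2/2}$ for all $z$; hence $H$ has finite order. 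Since $e^{\varphi^2/2}=H/\psi$ would have infinite order were $\varphi$ transcendental, the minimum modulus theorem applied to $\psi$ forces $\varphi$ to be a polynomial; if $\deg\varphi\ge2$, or if $\varphi(z)=az+b$ with $|a|>1$, then $|\varphi(z)|^2-|z|^2\to+\infty$ and the inequality $|\psi(z)|\le m(\psi,\varphi)\,e^{(|z|^2-|\varphi(z)|^2)/2}$ forces $\psi\equiv0$; hence $\varphi(z)=az+b$ with $|a|\le1$. Finally, when $a=0$ we have $W_{\psi,\varphi}f=f(b)\,\psi$, a rank-one operator, which is bounded --- hence compact --- as soon as $\psi\in\calF^q$.

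For the range $0<p\le q<\infty$ in~(ii): necessity of $m(\psi,\varphi)<\infty$ is part of the previous step, and necessity of $\dlim_{|z|\to\infty}m_z(\psi,\varphi)=0$ for compactness follows from Lemma~\ref{lem-com}, since for any $(z_n)$ with $|z_n|\to\infty$ one has $|\varphi(z_n)|\to\infty$ (recall $a\ne0$), whence $m_{z_n}(\psi,\varphi)\le\norm{W_{\psi,\varphi}k_{\varphi(z_n)}}_q\to0$. For the converse I would combine the embedding $\calF^p\hookrightarrow\calF^q$ for $p\le q$ (itself immediate from Lemma~\ref{Fp}) with the substitution $u=az+b$: from
$$
\norm{W_{\psi,\varphi}f}_q^q=\frac{q}{2\pi}\int_{\CC}m_z(\psi,\varphi)^q\,\bigl(|f(\varphi(z))|\,e^{-|\varphi(z)|^2/2}\bigr)^q\,dA(z),
$$
bounding $m_z(\psi,\varphi)$ by $m(\psi,\varphi)$ and changing variables yields $\norm{W_{\psi,\varphi}f}_q\le m(\psi,\varphi)\,|a|^{-2/q}\norm{f}_q\lesssim\norm{f}_p$, i.e.\ boundedness; replacing $m(\psi,\varphi)$ by $\sup_{|z|>R}m_z(\psi,\varphi)$ on $\{|z|>R\}$ and using uniform convergence on the compact set $\varphi(\{|z|\le R\})$ for the remaining part gives compactness when $\dlim_{|z|\to\infty}m_z(\psi,\varphi)=0$.

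For the range $0<q<p<\infty$ in~(ii), sufficiency is quick: in the last displayed integral, H\"older's inequality with exponents $p/q$ and $p/(p-q)$ together with $u=az+b$ gives $\norm{W_{\psi,\varphi}f}_q\lesssim\norm{m_{\bullet}(\psi,\varphi)}_{L^{pq/(p-q)}}\norm{f}_p$, and the same estimate confined to the region where $m_{\bullet}(\psi,\varphi)^{pq/(p-q)}$ has small integral upgrades this to compactness; in particular $m_{\bullet}(\psi,\varphi)\in L^{pq/(p-q)}$ already implies compactness, so boundedness and compactness coincide here. Taking $\psi\equiv1$, the condition reads that $m_z(1,\varphi)^{pq/(p-q)}=\exp\!\bigl(\tfrac{pq}{2(p-q)}(|az+b|^2-|z|^2)\bigr)$ be $dA$-integrable over $\CC$, which holds exactly when the quadratic term $(|a|^2-1)|z|^2$ in the exponent is negative, i.e.\ when $|a|<1$ --- the assertion about $C_\varphi$. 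The one genuinely delicate point is the converse: boundedness of $W_{\psi,\varphi}$ should imply $m_{\bullet}(\psi,\varphi)\in L^{pq/(p-q)}(\CC,dA)$. For this I would test on Rademacher sums $f_t=\sum_j c_j\,r_j(t)\,k_{a_j}$ over a sufficiently fine lattice $(a_j)$, using the standard bound $\norm{f_t}_p\lesssim\norm{(c_j)}_{\ell^p}$ (uniform in $t$) for such random sums in Fock spaces; integrating $\norm{W_{\psi,\varphi}f_t}_q^q\le\norm{W_{\psi,\varphi}}^q\norm{(c_j)}_{\ell^p}^q$ in $t$, applying Khinchine's inequality, using $|\psi(z)|\,|k_{a_j}(\varphi(z))|\,e^{-|z|^2/2}=m_z(\psi,\varphi)\,e^{-|\varphi(z)-a_j|^2/2}$, and changing variables $u=\varphi(z)$, one reaches $\sum_j|c_j|^q b_j\lesssim\norm{W_{\psi,\varphi}}^q\norm{(c_j)}_{\ell^p}^q$ for all $(c_j)\in\ell^p$, where $b_j$ is the integral of $m_z(\psi,\varphi)^q$ over the preimage of the $j$-th lattice cell; duality in $\ell^{p/q}$ (here $p/q>1$) then gives $(b_j)\in\ell^{p/(p-q)}$, and the sub-mean-value bound $m_{z_0}(\psi,\varphi)^q\le C\int_{D(z_0,r)}m_z(\psi,\varphi)^q\,dA(z)$ --- obtained by applying the usual Fock-space sub-mean-value estimate to the holomorphic function $W_{\psi,\varphi}k_{\varphi(z_0)}$, for which $m_{z_0}(\psi,\varphi)=|(W_{\psi,\varphi}k_{\varphi(z_0)})(z_0)|\,e^{-|z_0|^2/2}$ --- converts this into $\int_{\CC}m_z(\psi,\varphi)^{pq/(p-q)}\,dA(z)<\infty$. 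This Rademacher/Khinchine testing together with the sub-mean-value regularity of $m_{\bullet}(\psi,\varphi)$ is the main obstacle; everything else reduces to Lemmas~\ref{Fp} and~\ref{lem-com}, the affine change of variables, and the order argument pinning down the form of $\varphi$.
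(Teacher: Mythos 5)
The paper itself does not prove Theorem~\ref{thm-swco}; it is imported wholesale from \cite[Section~3]{TK-17} (with the structural result on $\varphi$ going back to \cite{TL-14}), so there is no internal proof to compare against. Your outline reconstructs the standard arguments and is essentially sound: testing on $k_{\varphi(z)}$ and Lemma~\ref{Fp} for the necessity of $m(\psi,\varphi)<\infty$; the finite-order argument via $H=\psi e^{\varphi^2/2}$ pinning $\varphi$ down to $az+b$ with $|a|\le 1$ (this is exactly \cite[Proposition~2.1]{TL-14}, which the present paper also invokes inside the proof of Proposition~\ref{prop-m}); the identity $\|W_{\psi,\varphi}f\|_q^q=\tfrac{q}{2\pi}\int m_z(\psi,\varphi)^q\bigl(|f(\varphi(z))|e^{-|\varphi(z)|^2/2}\bigr)^q\,dA(z)$ combined with the change of variables $u=\varphi(z)$ and the embedding $\calF^p\hookrightarrow\calF^q$ for $p\le q$; and the H\"older/Khinchine/duality scheme with a lattice of normalized kernels plus the sub-mean-value upgrade for $0<q<p$. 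Three points need more care before this counts as a complete proof. First, Lemma~\ref{lem-com} is stated only for $p,q\in(1,\infty)$, while the theorem covers all $0<p\le q<\infty$; for the full range you must prove the general fact that a compact operator sends the bounded, locally uniformly null family $(k_w)_{|w|\to\infty}$ to a norm-null family. Second, the bound $\norm{\sum_j c_j r_j(t)k_{a_j}}_p\lesssim\norm{(c_j)}_{\ell^p}$ holds for \emph{separated} lattices and its constant degenerates as the spacing shrinks, so ``sufficiently fine'' should read ``of a fixed, suitably chosen spacing'' (a fixed spacing simultaneously provides the separation for this estimate and the covering of $\CC$ by cells on which $|k_{a_j}(u)|e^{-|u|^2/2}$ is bounded below). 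Third, the step ``$e^{\varphi^2/2}=H/\psi$ has finite order'' needs the standard (but nontrivial) fact that an entire quotient of two finite-order entire functions has finite order; once that is in place the order count gives $\deg\varphi\le1$ directly. None of these affects the architecture, which matches the proofs in \cite{TK-17}.
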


In the present paper we study some topological properties for differences of weighted composition operators. Our main results are stated in the following theorems.

\begin{thm}\label{thm-m1}
Let $ 0 < p \leq q < \infty$ and $\psi_1, \psi_2, \varphi_1, \varphi_2$ be entire functions with $\psi_1 \not \equiv 0, \psi_2 \not \equiv 0$ and $\varphi_1 \not \equiv \varphi_2$. The following statements are true:
\begin{itemize}
\item[(a)] $W_{\psi_1, \varphi_1} - W_{\psi_2, \varphi_2}: \calF^p \to \calF^q$ is bounded if and only if both $W_{\psi_1, \varphi_1}$ and $W_{\psi_2, \varphi_2}: \calF^p \to \calF^q$ are bounded.
\item[(b)] $W_{\psi_1, \varphi_1} - W_{\psi_2, \varphi_2}: \calF^p \to \calF^q$ is compact if and only if both $W_{\psi_1, \varphi_1}$ and $W_{\psi_2, \varphi_2}: \calF^p \to \calF^q$ are compact.
\end{itemize}
\end{thm}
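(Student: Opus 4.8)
Throughout write $W:=W_{\psi_1,\varphi_1}-W_{\psi_2,\varphi_2}$. One implication in each of (a), (b) is immediate: if $W_{\psi_1,\varphi_1}$ and $W_{\psi_2,\varphi_2}:\calF^p\to\calF^q$ are bounded (resp.\ compact), then so is $W$ (for $q<1$ use $(s+t)^q\le s^q+t^q$; a difference of compact operators is compact). For the converse, since $W_{\psi_2,\varphi_2}=W_{\psi_1,\varphi_1}-W$, it suffices to show that $W$ bounded forces $W_{\psi_1,\varphi_1}$ bounded and that $W$ compact forces $W_{\psi_1,\varphi_1}$ compact; the index $1$ plays no special role, so the same then follows for $W_{\psi_2,\varphi_2}$.

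The engine is a pointwise estimate extracted from the normalized kernels. For $z\in\CC$, the plan is to apply $W$ to $k_{\varphi_1(z)}$ and evaluate the resulting element of $\calF^q$ at the point $z$: using $k_{\varphi_1(z)}(\varphi_1(z))=e^{|\varphi_1(z)|^2/2}$ and $|k_{\varphi_1(z)}(\varphi_2(z))|=e^{(|\varphi_2(z)|^2-|\varphi_1(z)-\varphi_2(z)|^2)/2}$, the reverse triangle inequality, Lemma \ref{Fp} (with exponent $q$), and $\|k_{\varphi_1(z)}\|_p=1$, one obtains, with $C$ the operator (quasi-)norm of $W$,
\[
m_z(\psi_1,\varphi_1)\ \le\ C+m_z(\psi_2,\varphi_2)\,e^{-|\varphi_1(z)-\varphi_2(z)|^2/2},
\]
and symmetrically with the two indices interchanged. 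Adding these and using $1-e^{-t}\ge t/(1+t)$ gives
\[
m_z(\psi_1,\varphi_1)+m_z(\psi_2,\varphi_2)\ \le\ \frac{C}{\rho(\varphi_1(z),\varphi_2(z))}\qquad\bigl(\varphi_1(z)\ne\varphi_2(z)\bigr).
\]
For the compactness version one repeats this with $C$ replaced by $\|Wk_{\varphi_1(z)}\|_q$ (resp.\ $\|Wk_{\varphi_2(z)}\|_q$); the analogue of Lemma \ref{lem-com} for $W$ and all $p,q\in(0,\infty)$ follows by passing to a convergent subsequence of $(Wk_{w_n})_n$ and using $k_{w_n}\to0$ in $\calO(\CC)$, so these quantities tend to $0$ along every $w_n\to\infty$.

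The main obstacle will be the structural step: deducing from boundedness of $W$ that $\psi_i\in\calF^q$ and $\varphi_i(z)=a_iz+b_i$ with $|a_i|\le1$, $i=1,2$. Here one rewrites the displayed estimate as
\[
\log|\psi_1(z)|+\tfrac12|\varphi_1(z)|^2\ \le\ \log\frac{C}{\rho(\varphi_1(z),\varphi_2(z))}+\tfrac12|z|^2
\]
off the zero set of $\varphi_1-\varphi_2$, which is discrete since $\varphi_1\not\equiv\varphi_2$. The left-hand side is subharmonic, so integrating over $|z|=r$, applying Jensen's formula to $\log|\psi_1|$, and estimating $\frac1{2\pi}\int\log\frac{C}{\rho(\varphi_1,\varphi_2)}\,d\theta$ via the first fundamental theorem of Nevanlinna theory ($m(r,1/(\varphi_1-\varphi_2))\le T(r,\varphi_1-\varphi_2)+O(1)\le T(r,\varphi_1)+T(r,\varphi_2)+O(1)$) yields $\frac1{2\pi}\int_0^{2\pi}|\varphi_1(re^{i\theta})|^2\,d\theta\le r^2+O(T(r,\varphi_1)+T(r,\varphi_2))+O(1)$; since $\varphi_i$ has no poles, a further application of Jensen gives $T(r,\varphi_i)\le\log\bigl(1+(r^2+O(T(r,\varphi_1)+T(r,\varphi_2))+O(1))^{1/2}\bigr)$, and a short bootstrap forces $T(r,\varphi_i)=O(\log r)$, so each $\varphi_i$ is a polynomial of degree $\le1$. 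Then $\varphi_1-\varphi_2$ is a nonzero affine function, hence $|\varphi_1-\varphi_2|$ is bounded below by a positive constant outside a compact set; with that estimate and continuity of $m_z(\psi_i,\varphi_i)$ this gives $m(\psi_i,\varphi_i)<\infty$. Consequently $|a_i|\le1$ (otherwise $m(\psi_i,\varphi_i)<\infty$ would force $\psi_i\to0$ at $\infty$, hence $\psi_i\equiv0$), and $\psi_i\in\calF^q$: if $a_i\ne0$ because $\|\psi_i\|_q^q=\frac{q}{2\pi}\int_\CC m_z(\psi_i,\varphi_i)^q e^{-q|\varphi_i(z)|^2/2}\,dA(z)\le m(\psi_i,\varphi_i)^q\frac{q}{2\pi}\int_\CC e^{-q|a_iz+b_i|^2/2}\,dA(z)<\infty$, and if $a_i=0$ by solving for $\psi_1,\psi_2$ from $W1=\psi_1-\psi_2\in\calF^q$ and $WK_w\in\calF^q$ for a suitable $w$.

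Finally, (a) follows by invoking Theorem \ref{thm-swco}: if $a_i\ne0$ then $\psi_i\in\calF^q\setminus\{0\}$, $\varphi_i(z)=a_iz+b_i$ with $0<|a_i|\le1$, and $m(\psi_i,\varphi_i)<\infty$ make $W_{\psi_i,\varphi_i}$ bounded by part (ii); if $a_i=0$ then $\psi_i\in\calF^q$ makes $W_{\psi_i,\varphi_i}$ compact, hence bounded, by part (i). For (b): a compact $W$ is bounded, so by (a) both $W_{\psi_i,\varphi_i}$ are bounded and the above structural facts hold; if $a_i=0$ then $W_{\psi_i,\varphi_i}f=f(b_i)\psi_i$ has rank one and is compact, while if $a_i\ne0$ one takes any $z_n$ with $|z_n|\to\infty$, notes that $w_n:=\varphi_1(z_n)\to\infty$ and $w_n':=\varphi_2(z_n)\to\infty$, and feeds $\|Wk_{w_n}\|_q,\|Wk_{w_n'}\|_q\to0$ into the compactness version of the coupled estimate; since $e^{-|\varphi_1(z_n)-\varphi_2(z_n)|^2/2}\le\theta<1$ for large $n$ (as $|\varphi_1-\varphi_2|$ is a nonzero constant or tends to $\infty$), this forces $m_{z_n}(\psi_i,\varphi_i)\to0$. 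Hence $\lim_{|z|\to\infty}m_z(\psi_i,\varphi_i)=0$ and $W_{\psi_i,\varphi_i}$ is compact by Theorem \ref{thm-swco}(ii).
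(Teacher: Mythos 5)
Your proposal is correct in outline, and it diverges from the paper's proof at the decisive structural step. The paper first proves a lower bound for $\|\alpha_1K_{w_1}+\alpha_2K_{w_2}\|_\infty$ (Lemma \ref{lem-est}), tests $L=W_{\psi_1,\varphi_1}-W_{\psi_2,\varphi_2}$ on $k_w$, and takes a supremum over $w$ to reach $\|L\|\ge\rho(\varphi_1(z),\varphi_2(z))\bigl(m_z(\psi_1,\varphi_1)+m_z(\psi_2,\varphi_2)\bigr)$; you reach the same coupled inequality more directly by evaluating $Lk_{\varphi_1(z)}$ and $Lk_{\varphi_2(z)}$ at $z$ and adding (this is exactly the computation the paper uses later for the essential norm, so that part is equivalent in content). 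The real difference is what you do next: the paper takes logarithms, multiplies the two inequalities to manufacture the single pair $f=\psi_1\psi_2(\varphi_1-\varphi_2)^4$, $\varphi=(\varphi_1-\varphi_2)/2$ satisfying an $m(f,\varphi)<\infty$-type bound, and invokes Le's Proposition 2.1 twice to get first $\varphi_1-\varphi_2$ affine and then each $\varphi_j(z)=a_jz+b_j$ with $|a_j|\le1$; you instead integrate the logarithmic inequality over circles and run a Jensen/Nevanlinna bootstrap ($T(r,\varphi_i)=O(\log r)$, hence $\varphi_i$ polynomial of degree $\le1$, and the circle mean of $|\varphi_i|^2$ even gives $|a_i|\le1$ directly). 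That route is sound and has the merit of being self-contained (no appeal to \cite{TL-14}), though it is the step that needs the most care when written out (locally integrable singularities of $\log|\psi_1|$ and $\log|\varphi_1-\varphi_2|$ on circles, and the bootstrap dichotomy $S(r)\le r^2$ versus $S(r)>r^2$). Likewise for (b) you argue directly — $\|Lk_{\varphi_j(z)}\|_q\to0$ for the specific compact operator $L$ via normal families plus pointwise identification of the limit, valid for all $p,q$, then $m_z(\psi_j,\varphi_j)\to0$ — whereas the paper simply cites \cite[Theorem~4.1]{TK-17} together with (a). Two minor bookkeeping points to tighten: in the mixed case $a_1=0\ne a_2$ your ``solve from $W1$ and $WK_w$'' device does not apply (the coefficients are no longer constants), but there you get $\psi_2\in\calF^q$ from the integral formula, hence $W_{\psi_2,\varphi_2}$ bounded, and $W_{\psi_1,\varphi_1}=L+W_{\psi_2,\varphi_2}$ is bounded without needing $\psi_1\in\calF^q$ a priori; and Theorem \ref{thm-swco}(ii) requires $\psi_j\not\equiv0$ and $\psi_j\in\calF^q$, both of which you have established, so the final invocations are legitimate.
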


The case $0 < q < p < \infty$ is more complicated and we get the result only for a linear combination $c_1 C_{\varphi_1} + c_2 C_{\varphi_2}$. A similar problem for $W_{\psi_1, \varphi_1} - W_{\psi_2, \varphi_2}$ is till now unsolved.

\begin{thm}\label{thm-m2}
Let $0 < q < p < \infty$ and $\varphi_1, \varphi_2$ be entire functions with $\varphi_1 \not \equiv \varphi_2$
and $c_1, c_2$ nonzero constants. The following conditions are equivalent.
\begin{itemize}
\item[(i)] $c_1 C_{\varphi_1} + c_2 C_{\varphi_2}: \calF^p \to \calF^q$ is bounded.
\item[(ii)] $c_1 C_{\varphi_1} + c_2 C_{\varphi_2}: \calF^p \to \calF^q$ is compact.
\item[(iii)] Both $C_{\varphi_1}$ and $C_{\varphi_2}: \calF^p \to \calF^q$ are compact.
\end{itemize}
\end{thm}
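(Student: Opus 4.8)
The plan is to prove (i)$\Rightarrow$(iii); the implications (iii)$\Rightarrow$(ii)$\Rightarrow$(i) are immediate, since a linear combination of compact operators is compact and a compact operator is bounded. Assume $T:=c_1C_{\varphi_1}+c_2C_{\varphi_2}:\calF^p\to\calF^q$ is bounded. As in the proof of Theorem~\ref{thm-m1} (testing $T$ on the kernels $K_w$, for which $\|K_w\|_p=e^{|w|^2/2}$, and using that an entire function in $\calF^q$ has order at most $2$ and type at most $1/2$ at order $2$, which pins down $\deg\varphi_j$), one gets that $\varphi_1$ and $\varphi_2$ are affine, $\varphi_j(z)=a_jz+b_j$ with $|a_j|\le1$. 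To pin down the coefficients further I would feed the unit vectors $k_w$ into $T:\calF^p\to\calF^\infty$ (legitimate since $\|g\|_\infty\le\|g\|_q$ for $g\in\calF^q$ by Lemma~\ref{Fp}), evaluate at $z$ with the special choices $w=\varphi_1(z)$ and $w=\varphi_2(z)$, and use the identity $\re(\overline{u}v)-\tfrac12|u|^2=\tfrac12(|v|^2-|u-v|^2)$ to obtain, for all $z\in\CC$,
\[
\bigl(1-e^{-|\varphi_1(z)-\varphi_2(z)|^2/2}\bigr)\bigl(|c_1|\,m_z(1,\varphi_1)+|c_2|\,m_z(1,\varphi_2)\bigr)\le 2\|T\|.
\]
Since $\varphi_1-\varphi_2$ is a nonzero affine function, the first factor stays bounded below by a positive constant off a compact set (or everywhere, if $\varphi_1-\varphi_2$ is constant), so $m(1,\varphi_1)<\infty$ and $m(1,\varphi_2)<\infty$; and because $m_z(1,\varphi_j)=e^{\re(\overline{b_j}a_jz)+|b_j|^2/2}$ is unbounded when $|a_j|=1$ unless $b_j=0$, we conclude that $|a_j|=1$ forces $b_j=0$.

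If $|a_1|<1$ and $|a_2|<1$, then Theorem~\ref{thm-swco} gives that $C_{\varphi_1}$ and $C_{\varphi_2}$ are compact from $\calF^p$ to $\calF^q$, which is (iii). So suppose $|a_1|=1$; then $\varphi_1(z)=a_1z$. If $|a_2|<1$, then $C_{\varphi_2}:\calF^p\to\calF^q$ is bounded by Theorem~\ref{thm-swco}(ii), hence $c_1C_{\varphi_1}=T-c_2C_{\varphi_2}$ is bounded; but $C_{\varphi_1}$ with $|a_1|=1$ is not bounded from $\calF^p$ to $\calF^q$ (Theorem~\ref{thm-swco}(ii)), a contradiction. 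Therefore $|a_2|=1$ as well, $\varphi_2(z)=a_2z$, and $a_1\ne a_2$ because $\varphi_1\not\equiv\varphi_2$. It thus remains to rule out the case $\varphi_1(z)=a_1z$, $\varphi_2(z)=a_2z$ with $|a_1|=|a_2|=1$, $a_1\ne a_2$.

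Here the reproducing kernels carry no information: since $C_{\varphi_j}k_w=k_{\overline{a_j}w}$ one has $\|Tk_w\|_q\to(|c_1|^q+|c_2|^q)^{1/q}$ as $|w|\to\infty$. So I would switch to monomials. Precompose $T$ with the rotation operator $C_{\varphi_1^{-1}}$, which is an isometry of $\calF^p$; the resulting bounded operator $S:=TC_{\varphi_1^{-1}}:\calF^p\to\calF^q$ satisfies $Sf(z)=c_1f(z)+c_2f(\mu z)$ with $\mu:=a_2/a_1$, $|\mu|=1$, $\mu\ne1$. Then $S(z^n)=(c_1+c_2\mu^n)z^n$, so $|c_1+c_2\mu^n|\,\|z^n\|_q\le\|S\|\,\|z^n\|_p$ for all $n\ge0$. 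A direct computation gives $\|z^n\|_p=(2/p)^{n/2}\Gamma(np/2+1)^{1/p}$, and Stirling's formula yields $\|z^n\|_q/\|z^n\|_p\asymp n^{(1/q-1/p)/2}\to\infty$ because $q<p$; hence $|c_1+c_2\mu^n|\le\|S\|\,\|z^n\|_p/\|z^n\|_q\to0$. On the other hand $\mu\ne1$ gives $\limsup_n|c_1+c_2\mu^n|>0$ (clear when $\mu$ is not a root of unity, as $\{\mu^n\}$ is then dense in the unit circle and $c_1+c_2\zeta$ is not identically zero there; and when $\mu$ has finite order the finitely many numbers $|c_1+c_2\mu^j|$ cannot all vanish). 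This contradiction shows that the two-rotation case does not occur. Hence $|a_1|<1$ and $|a_2|<1$, which completes the proof of (iii).

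I expect the two-rotation case to be the main obstacle. For $p\le q$ (Theorem~\ref{thm-m1}) the normalized reproducing kernels already detect (non)boundedness, but here, when both symbols are rotations, the quantities $\|Tk_w\|_q$ stay bounded and are useless; it is exactly the strict inequality $\|z^n\|_q\gg\|z^n\|_p$ of monomial norms—a genuine feature of the regime $q<p$—that forces (i)$\Leftrightarrow$(iii). The same mechanism indicates why the fully weighted difference $W_{\psi_1,\varphi_1}-W_{\psi_2,\varphi_2}$ is more delicate.
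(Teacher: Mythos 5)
Your proof is correct and follows essentially the same route as the paper: establish that both symbols are affine with $m(c_j,\varphi_j)<\infty$ via kernel testing, reduce to the case of two distinct rotations $\varphi_j(z)=a_jz$ with $|a_j|=1$, and derive a contradiction by applying the operator to the monomials $z^n$ and comparing $\|z^n\|_p$ with $\|z^n\|_q$ through Stirling's formula. The only cosmetic differences are your normalization by precomposing with the rotation $C_{\varphi_1^{-1}}$ and your density/periodicity argument showing $\limsup_n|c_1+c_2\mu^n|>0$, where the paper instead splits into the cases $|c_1|\neq|c_2|$ and $|c_1|=|c_2|$ and runs a short trigonometric computation.
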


In what follows, for a bounded linear operator $L$ from $\calF^p$ into $\calF^q$, the essential norm of $L$, denoted by $\|L\|_e$, is defined as  
$$
\|L\|_e: =\inf\{\|L-K\|: K \text{ is compact from } \calF^p \text{ into } \calF^q\}.
$$
Now we give estimates for essential norm of a difference $W_{\psi_1, \varphi_1} - W_{\psi_2, \varphi_2}$. By Theorem \ref{thm-m2}, we are only interested in the case $0 < p \leq q < \infty$. In this case, by Theorem \ref{thm-m1}, both $W_{\psi_1, \varphi_1}$ and $W_{\psi_2, \varphi_2}$ are bounded. Then, by Theorem \ref{thm-swco}, $\varphi_1(z) = a_1 z + b_1$ and $\varphi_2(z) = a_2 z + b_2$ with $|a_1| \leq 1$ and $|a_2| \leq 1$.

Obviously, if $a_1 = 0$ (or $a_2 = 0$), then, by Theorem \ref{thm-swco}, $W_{\psi_1, \varphi_1}$ (or, respectively, $W_{\psi_2, \varphi_2}$) is compact from $\calF^p$ into $\calF^q$, and hence, $\|W_{\psi_1, \varphi_1} - W_{\psi_2, \varphi_2}\|_e = \|W_{\psi_2, \varphi_2}\|_e$ (or, respectively, $\|W_{\psi_1, \varphi_1} - W_{\psi_2, \varphi_2}\|_e = \|W_{\psi_1, \varphi_1}\|_e$), estimates for which are given in \cite[Theorem~3.7]{TK-17}.

In this view, we only study the case when $a_1 \neq 0$ and $a_2 \neq 0$.

\begin{thm}\label{thm-ess}
Let $1 < p \leq q < \infty$ and $\psi_1, \psi_2, \varphi_1, \varphi_2$ be entire functions with $\psi_1 \not \equiv 0, \psi_2 \not \equiv 0$ and $\varphi_1 \not \equiv \varphi_2$. Suppose that $W_{\psi_1,\varphi_1} - W_{\psi_2,\varphi_2}: \calF^p \to \calF^q$ is bounded, and hence, $\varphi_1(z) = a_1 z + b_1, \varphi_2(z) = a_2 z + b_2$ with $0 < |a_1|, |a_2| \leq 1$. Then
\begin{align*}
& \alpha_{\varphi_1, \varphi_2} \limsup_{|z|\to\infty}  \left( m_z(\psi_1,\varphi_1) + m_z(\psi_2,\varphi_2) \right) \leq \|W_{\psi_1,\varphi_1} - W_{\psi_2,\varphi_2}\|_e \\ 
& \leq 2\left( \dfrac{q}{p|a_1|^2}\right)^{\frac{1}{q}} \limsup_{|z| \to \infty} m_z(\psi_1, \varphi_1) + 2\left( \dfrac{q}{p|a_2|^2}\right)^{\frac{1}{q}} \limsup_{|z| \to \infty} m_z(\psi_2, \varphi_2), 
\end{align*}
where
$$
\alpha_{\varphi_1, \varphi_2}: = 
\begin{cases}
\frac{1}{2}, \qquad \ \ \text{  if } a_1 \neq a_2 \\
\rho(b_1, b_2), \text{ if } a_1 = a_2.
\end{cases}
$$
\end{thm}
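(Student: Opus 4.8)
The plan is to establish the two inequalities separately, the upper estimate being a fairly direct consequence of the single-operator theory and the lower estimate requiring a test-function argument built around the normalized kernels $k_w$.

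For the \emph{upper bound}, I would use the subadditivity of the essential norm: $\|W_{\psi_1,\varphi_1}-W_{\psi_2,\varphi_2}\|_e \leq \|W_{\psi_1,\varphi_1}\|_e + \|W_{\psi_2,\varphi_2}\|_e$. Since $W_{\psi_1,\varphi_1}-W_{\psi_2,\varphi_2}$ is bounded, Theorems \ref{thm-m1} and \ref{thm-swco} guarantee each $W_{\psi_i,\varphi_i}$ is bounded with $\varphi_i$ affine, so the single-operator essential-norm estimate from \cite[Theorem~3.7]{TK-17} applies and yields $\|W_{\psi_i,\varphi_i}\|_e \leq 2\left(\frac{q}{p|a_i|^2}\right)^{1/q}\limsup_{|z|\to\infty} m_z(\psi_i,\varphi_i)$. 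Adding the two gives exactly the claimed right-hand side; this part is essentially bookkeeping.

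For the \emph{lower bound}, the standard device is that for any compact $K:\calF^p\to\calF^q$ and any sequence $(w_n)$ with $|w_n|\to\infty$, one has $\|W_{\psi_1,\varphi_1}-W_{\psi_2,\varphi_2}-K\| \geq \limsup_n \|(W_{\psi_1,\varphi_1}-W_{\psi_2,\varphi_2})k_{w_n}\|_q$, because $k_{w_n}\to 0$ weakly (it goes to $0$ in $\calO(\CC)$ and is norm-bounded, so $Kk_{w_n}\to 0$ in $\calF^q$ by Lemma \ref{lem-com}, using $p,q\in(1,\infty)$). Taking the infimum over $K$, $\|W_{\psi_1,\varphi_1}-W_{\psi_2,\varphi_2}\|_e \geq \limsup_n \|(W_{\psi_1,\varphi_1}-W_{\psi_2,\varphi_2})k_{w_n}\|_q$. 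Now I must choose the points $w_n$ cleverly. Pick $z_n$ with $|z_n|\to\infty$ that nearly realizes $\limsup_{|z|\to\infty}(m_z(\psi_1,\varphi_1)+m_z(\psi_2,\varphi_2))$, and set $w_n$ so that $\overline{w_n}$ matches $\varphi_1(z_n)$ — then $(W_{\psi_1,\varphi_1}k_{w_n})(z_n) = \psi_1(z_n)k_{\varphi_1(z_n)}(z_n)$ has modulus exactly $m_{z_n}(\psi_1,\varphi_1)$, while the cross term $(W_{\psi_2,\varphi_2}k_{w_n})(z_n)$ is controlled by $m_{z_n}(\psi_2,\varphi_2)$ times a factor $e^{\re(\overline{w_n}(\varphi_2(z_n)-\varphi_1(z_n)))-\cdots}$ measuring the separation of $\varphi_1,\varphi_2$ at $z_n$. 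Using Lemma \ref{Fp} to pass from a pointwise lower bound at $z_n$ to a norm lower bound costs only the factor $e^{|z_n|^2/2}$, which is exactly cancelled by the weight, so $\|(W_{\psi_1,\varphi_1}-W_{\psi_2,\varphi_2})k_{w_n}\|_q \gtrsim$ (the pointwise value at $z_n$).

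The delicate point — and the main obstacle — is the dichotomy in $\alpha_{\varphi_1,\varphi_2}$. When $a_1\neq a_2$, the quantity $|\varphi_1(z_n)-\varphi_2(z_n)| = |(a_1-a_2)z_n + (b_1-b_2)|\to\infty$, so for a well-chosen subsequence the two images separate completely and one can extract essentially the full value of either $m_{z_n}(\psi_1,\varphi_1)$ or $m_{z_n}(\psi_2,\varphi_2)$ individually; passing to whichever of the two $\limsup$s is larger and recombining gives the factor $\tfrac12$. When $a_1=a_2$ but $b_1\neq b_2$, the separation $|\varphi_1(z)-\varphi_2(z)| = |b_1-b_2|$ is a fixed positive constant, so the cross term cannot be killed; instead one must compute $\|(W_{\psi_1,\varphi_1}-W_{\psi_2,\varphi_2})k_{w_n}\|_q$ more carefully, and the geometric factor that survives is precisely $\rho(b_1,b_2) = \frac{|b_1-b_2|^2}{2(2+|b_1-b_2|^2)}$ — this is where the definition of $\rho$ is engineered to fit. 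I expect the bulk of the work to be in this second case: one sets up the integral $\frac{q}{2\pi}\int_\CC |\psi_1(z)k_{w_n}(\varphi_1(z)) - \psi_2(z)k_{w_n}(\varphi_2(z))|^q e^{-q|z|^2/2}\,dA(z)$, localizes it near $z_n$, and uses the triangle inequality together with the explicit exponential in $k_w$ to extract the constant $\rho(b_1,b_2)$; the nonzero distance between $\varphi_1$ and $\varphi_2$ prevents the two terms from cancelling and is what makes $\rho(b_1,b_2)>0$.
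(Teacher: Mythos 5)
Your upper bound and the general framework of your lower bound (testing against $k_w$, killing $Tk_w$ via Lemma \ref{lem-com} since $|\varphi_j(z)|\to\infty$, and passing from a pointwise value to a norm lower bound via Lemma \ref{Fp}) coincide with the paper, and your outline for the case $a_1\neq a_2$ would go through. The genuine gap is the case $a_1=a_2$, which you correctly identify as the hard part but leave as an expectation rather than an argument. With a single test point per $z_n$ (note also it should be $w_n=\varphi_1(z_n)$, not $\overline{w_n}=\varphi_1(z_n)$, since $k_w(\zeta)=e^{\overline{w}\zeta-|w|^2/2}$), the pointwise estimate at $z_n$ only yields $\left|m_{z_n}(\psi_1,\varphi_1)-m_{z_n}(\psi_2,\varphi_2)e^{-|b_1-b_2|^2/2}\right|$, which can vanish even when both quantities are large. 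The integral localization you propose as a substitute does not escape this: near $z_n$ the two terms of $(W_{\psi_1,\varphi_1}-W_{\psi_2,\varphi_2})k_{w_n}$ can still nearly cancel, and you give no mechanism for extracting the sum $m_{z_n}(\psi_1,\varphi_1)+m_{z_n}(\psi_2,\varphi_2)$ with the factor $\rho(b_1,b_2)$ from it.

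The missing idea --- the same device as in Lemma \ref{lem-est} --- is to use \emph{two} test points for the same $z$. With $w=\varphi_1(z)$ one gets $\|L-T\|\geq \left|m_z(\psi_1,\varphi_1)-m_z(\psi_2,\varphi_2)e^{-d(z)}\right|-\|Tk_{\varphi_1(z)}\|_q$, where $d(z)=\tfrac12|\varphi_1(z)-\varphi_2(z)|^2$, and with $w=\varphi_2(z)$ the symmetric bound. Adding these and using $|A-Be^{-d}|+|B-Ae^{-d}|\geq(A+B)(1-e^{-d})$ together with $1-e^{-x}\geq x/(1+x)$ gives
$2\|L-T\|\geq 2\rho(\varphi_1(z),\varphi_2(z))\left(m_z(\psi_1,\varphi_1)+m_z(\psi_2,\varphi_2)\right)-\|Tk_{\varphi_1(z)}\|_q-\|Tk_{\varphi_2(z)}\|_q$.
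Letting $|z|\to\infty$ then settles both cases simultaneously, since $\rho(\varphi_1(z),\varphi_2(z))\to\tfrac12$ when $a_1\neq a_2$ and is identically $\rho(b_1,b_2)$ when $a_1=a_2$; no $L^q$ integral computation and no case-splitting in the main estimate are needed.
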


Note that in the settings of Theorem \ref{thm-ess}, the part (b) of Theorem \ref{thm-m1} is an immediate consequence of Theorem \ref{thm-ess}. Indeed, if $W_{\psi_1,\varphi_1} - W_{\psi_2,\varphi_2}$ is compact from $\calF^p$ into $\calF^q$, i.e. $\|W_{\psi_1,\varphi_1} - W_{\psi_2,\varphi_2}\|_e = 0$, then by the lower estimate of $\|W_{\psi_1,\varphi_1} - W_{\psi_2,\varphi_2}\|_e$,
$$ 
\limsup_{|z|\to\infty}  \left( m_z(\psi_1,\varphi_1) + m_z(\psi_2,\varphi_2) \right)= 0
$$
From this and Theorem \ref{thm-swco}, it follows that both $W_{\psi_1, \varphi_1}$ and $W_{\psi_2, \varphi_2}$ are compact. 

\section{Proofs of the main theorems}

We start with the following lower estimate of the norm of linear sums of two functions $K_{w_1}$ and $K_{w_2}$.

\begin{lem}\label{lem-est}
For every $\alpha_1, \alpha_2, w_1, w_2 \in \CC$, the following inequality holds
\begin{equation} \label{eq-est}
\|\alpha_1  K_{w_1} + \alpha_2  K_{w_2}\|_{\infty} \geq \rho(w_1,w_2) \left(|\alpha_1| \|K_{w_1}\|_{\infty} + |\alpha_2| \|K_{w_2}\|_{\infty}\right).
\end{equation}
\end{lem}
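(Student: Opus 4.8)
The plan is to test the function $f := \alpha_1 K_{w_1} + \alpha_2 K_{w_2}$ at the two points $z = w_1$ and $z = w_2$, exploiting that the ``cross terms'' $K_{w_2}(w_1)$ and $K_{w_1}(w_2)$ become small, relative to the norms, precisely when $|w_1 - w_2|$ is large. Set $r := |w_1 - w_2|$. If $r = 0$, then $\rho(w_1,w_2) = 0$ and \eqref{eq-est} is trivial, so I assume $r > 0$.

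First I would record the elementary pointwise identities. Since $\re(\overline{w_2}w_1) = \tfrac12(|w_1|^2 + |w_2|^2 - r^2)$, one has
$$
|K_{w_1}(w_1)|\, e^{-|w_1|^2/2} = e^{|w_1|^2/2} = \|K_{w_1}\|_{\infty}, \qquad |K_{w_2}(w_1)|\, e^{-|w_1|^2/2} = e^{\re(\overline{w_2}w_1) - |w_1|^2/2} = e^{-r^2/2}\,\|K_{w_2}\|_{\infty},
$$
and symmetrically with $w_1$ and $w_2$ interchanged. Writing $A := |\alpha_1|\,\|K_{w_1}\|_{\infty}$, $B := |\alpha_2|\,\|K_{w_2}\|_{\infty}$ and $s := e^{-r^2/2} \in (0,1)$, the pointwise bound $|f(w_j)|\,e^{-|w_j|^2/2} \leq \|f\|_{\infty}$ together with the triangle inequality then yields
$$
\|f\|_{\infty} \geq \max\bigl(\,|A - sB|,\ |B - sA|\,\bigr).
$$

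Next comes the purely algebraic core. By symmetry of both sides in $(A,B)$ I may assume $A \geq B$; then $A - sB \geq A - B \geq 0$, and the identity
$$
(A - sB) - \tfrac{1-s}{2}(A+B) = \tfrac{(A-B)(1+s)}{2} \geq 0
$$
gives $\|f\|_{\infty} \geq \tfrac{1-s}{2}(A+B)$. Finally, from $e^{-x} \leq \tfrac{1}{1+x}$ for $x \geq 0$ applied with $x = r^2/2$, I obtain $1 - s \geq \tfrac{r^2}{2+r^2}$, hence $\tfrac{1-s}{2} \geq \tfrac{r^2}{2(2+r^2)} = \rho(w_1,w_2)$, which combined with the previous display is exactly \eqref{eq-est}.

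There is no serious obstacle here; the one genuine idea is the choice of the two test points, after which everything is bookkeeping. The points to be careful about are (i) the computation of the modulus of the cross term, where the identity $\re(\overline{w_2}w_1) = \tfrac12(|w_1|^2 + |w_2|^2 - |w_1 - w_2|^2)$ is what produces the factor $e^{-r^2/2}$, and (ii) keeping the case split $A \geq B$ straight so that the quantity inside $|\cdot|$ has a definite sign. The particular form of the constant $\rho$ is then forced by the sharp elementary estimate $e^{-x} \leq (1+x)^{-1}$.
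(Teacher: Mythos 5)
Your proof is correct and follows essentially the same route as the paper: evaluate $\alpha_1K_{w_1}+\alpha_2K_{w_2}$ at the two points $w_1$ and $w_2$, use the identity $|w_1-w_2|^2=|w_1|^2+|w_2|^2-2\re(\overline{w_2}w_1)$ to produce the factor $e^{-|w_1-w_2|^2/2}$ in the cross terms, and finish with $1-e^{-x}\geq x/(1+x)$. The only cosmetic difference is that the paper simply adds the two lower bounds to get $2\|f\|_\infty\geq(A+B)(1-s)$ instead of your $\max$/WLOG case split.
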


\begin{proof}
By the definition
$$
\|\alpha_1  K_{w_1} + \alpha_2  K_{w_2}\|_{\infty} \geq |\alpha_1  K_{w_1}(z) + \alpha_2  K_{w_2}(z)|e^{-\frac{|z|^2}{2}}, \ \forall z \in \CC.
$$
In particular, with $z = w_1$ the last inequality gives
\begin{align*}
\|\alpha_1  K_{w_1} + \alpha_2  K_{w_2}\|_{\infty} &\geq \left| \alpha_1  K_{w_1}(w_1) + \alpha_2  K_{w_2}(w_1) \right| e^{-\frac{|w_1|^2}{2}} \\
& = \left| \alpha_1  e^{\frac{|w_1|^2}{2}} + \alpha_2  e^{\overline{w_2}w_1 - \frac{|w_1|^2}{2}} \right| \\
& \geq |\alpha_1|  e^{\frac{|w_1|^2}{2}} - |\alpha_2|  \left| e^{\overline{w_2}w_1 - \frac{|w_1|^2}{2}} \right| \\
& = |\alpha_1|  e^{\frac{|w_1|^2}{2}} - |\alpha_2|  e^{\frac{|w_2|^2}{2} - \frac{|w_1-w_2|^2}{2}} \\
& = |\alpha_1| \|K_{w_1}\|_{\infty} - |\alpha_2| \|K_{w_2}\|_{\infty} e^{ - \frac{|w_1-w_2|^2}{2}}.
\end{align*}
Similarly with $z = w_2$ we have
$$
\|\alpha_1  K_{w_1} + \alpha_2  K_{w_2}\|_{\infty} \geq |\alpha_2| \|K_{w_2}\|_{\infty} - |\alpha_1| \|K_{w_1}\|_{\infty} e^{ - \frac{|w_2 - w_1|^2}{2}}.
$$
Combining these inequalities we get
$$
2\|\alpha_1  K_{w_1} + \alpha_2  K_{w_2}\|_{\infty} \geq \left(|\alpha_1| \|K_{w_1}\|_{\infty} + |\alpha_2| \|K_{w_2}\|_{\infty}\right) \left( 1 - e^{-\frac{|w_1 - w_2|^2}{2}} \right).
$$
From this and the fact that $ 1 - e^{-x} \geq \frac{x}{1+x}$ for all $x \geq 0$, the desired inequality follows.
\end{proof}

Next we give a necessary condition for the boundedness of a difference
$W_{\psi_1,\varphi_1} - W_{\psi_2,\varphi_2}$, which plays an essential role in this paper.

\begin{prop}\label{prop-m}
Let $p, q \in (0, \infty)$. If the difference $L: = W_{\psi_1,\varphi_1} - W_{\psi_2,\varphi_2}$ is bounded from $\calF^p$ into $\calF^q$, then $m(\psi_j, \varphi_j) < \infty$. In this case, $\varphi_j(z) = a_j z + b_j$  with $|a_j| \leq 1$ and $j=1, 2$.
\end{prop}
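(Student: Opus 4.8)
\emph{Step 1: a master inequality.} The plan is to separate the two weighted composition operators by testing $L$ on suitable unit vectors and invoking the pointwise bound of Lemma~\ref{Fp}. For $w\in\CC$ the kernel $k_w$ has $\|k_w\|_p=1$, so $Lk_w\in\calF^q$ with $\|Lk_w\|_q\le\|L\|$, and Lemma~\ref{Fp} gives, for all $z,w\in\CC$,
\[
|\psi_1(z)e^{\overline w\varphi_1(z)}-\psi_2(z)e^{\overline w\varphi_2(z)}|\,e^{-|w|^2/2}\le e^{|z|^2/2}\|L\|.
\]
Fixing $z$ and taking the supremum over $w$ — and noting that replacing $\psi_1(z),\psi_2(z)$ by their complex conjugates changes nothing in the modulus — the left side equals $\|\overline{\psi_1(z)}\,K_{\varphi_1(z)}-\overline{\psi_2(z)}\,K_{\varphi_2(z)}\|_\infty$, so Lemma~\ref{lem-est} together with $\|K_w\|_\infty=e^{|w|^2/2}$ yields the master inequality
\[
\rho(\varphi_1(z),\varphi_2(z))\bigl(m_z(\psi_1,\varphi_1)+m_z(\psi_2,\varphi_2)\bigr)\le\|L\|,\qquad z\in\CC.
\]
(For $0<p<1$ one only uses that $\|\cdot\|_p$ is homogeneous, so ``bounded'' still means $\|Lf\|_q\le\|L\|\,\|f\|_p$.) Since $z\mapsto m_z(\psi_j,\varphi_j)$ is continuous and $\rho$ is bounded below away from the set where $\varphi_1=\varphi_2$, this already shows that $m(\psi_j,\varphi_j)$ can fail to be finite only along some sequence $z_n\to\infty$ with $\varphi_1(z_n)-\varphi_2(z_n)\to0$.

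\emph{Step 2: the near-coincidence region.} To exclude that, I would test $L$ on the ``first-order'' functions $h_w(z):=(z-\overline w)\,k_{\overline w}(z)=(z-\overline w)e^{wz-|w|^2/2}$. The change of variable $z=\zeta+\overline w$ shows that $\|h_w\|_p$ equals a constant $c_p$ (the $\calF^p$-norm of the monomial $z$), independent of $w$. Applying Lemma~\ref{Fp} to $Lh_w\in\calF^q$ and choosing $w=\overline{\varphi_1(z)}$ (depending on $z$) annihilates the $\psi_1$-term of $Lh_w(z)$, and a short computation gives
\[
|\varphi_1(z)-\varphi_2(z)|\;m_z(\psi_2,\varphi_2)\;e^{-|\varphi_1(z)-\varphi_2(z)|^2/2}\le c_p\|L\|,
\]
and symmetrically with the indices $1,2$ interchanged. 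Using the first-order estimate when $|\varphi_1(z)-\varphi_2(z)|\le1$ and the master inequality when $|\varphi_1(z)-\varphi_2(z)|\ge1$ (where $1/\rho$ is bounded) one obtains the uniform estimate
\[
m_z(\psi_j,\varphi_j)\,\min\bigl(1,\,|\varphi_1(z)-\varphi_2(z)|\bigr)\le C\|L\|,\qquad z\in\CC,\ j=1,2.
\]

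\emph{Step 3: affineness and conclusion.} Here the hypothesis $\varphi_1\not\equiv\varphi_2$ enters. From the last estimate, $2\log|\psi_j(z)|+|\varphi_j(z)|^2\le|z|^2+C+2\log^{+}\frac1{|\varphi_1(z)-\varphi_2(z)|}$; averaging over circles and using Jensen's formula to absorb the $\log^{+}(1/|\varphi_1-\varphi_2|)$-term shows the subharmonic function $u_j:=2\log|\psi_j|+|\varphi_j|^2$ has circular means that grow at most like $r^2$, hence Riesz mass $O(r^2)$ on $D(0,r)$; since that mass dominates $4\int_{D(0,r)}|\varphi_j'|^2\,dA$, the sub-mean value property of $|\varphi_j'|^2$ forces $\varphi_j'$ to be bounded, so $\varphi_j(z)=a_jz+b_j$ (this is the Liouville-type argument behind Theorem~\ref{thm-swco}(i)). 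With $\varphi_j$ affine and $\varphi_1\not\equiv\varphi_2$, $\varphi_1-\varphi_2$ is a nonzero constant or has modulus tending to $\infty$, so $\min(1,|\varphi_1-\varphi_2|)$ is bounded below and the estimate of Step~2 gives $m(\psi_j,\varphi_j)<\infty$. Finally $|a_j|\le1$, since otherwise $|\psi_j(z)|\le m(\psi_j,\varphi_j)\,e^{(|z|^2-|\varphi_j(z)|^2)/2}\to0$ as $|z|\to\infty$, forcing the entire function $\psi_j$ to be $\equiv0$, against hypothesis.

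\emph{Main obstacle.} The delicate point is the near-coincidence region of Step~2: there the master inequality degenerates ($\rho\to0$), and one genuinely has to play the first-order test functions $h_w$ off against the fact that $\varphi_1-\varphi_2$ is a nonzero entire function in order to keep $m_z(\psi_j,\varphi_j)$ bounded. By contrast, Step~1 and the deduction of affineness from $m(\psi_j,\varphi_j)<\infty$ are routine and parallel the single-operator case.
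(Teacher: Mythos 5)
Your Step 1 reproduces exactly the paper's master inequality (test on $k_w$, take the sup over $w$, apply Lemma~\ref{lem-est}), and your Step 2 is a correct and rather nice extra device: the computation with $h_w(z)=(z-\overline w)k_{\overline w}(z)$ and $w=\overline{\varphi_1(z)}$ does give $|\varphi_1(z)-\varphi_2(z)|\,m_z(\psi_2,\varphi_2)\,e^{-|\varphi_1(z)-\varphi_2(z)|^2/2}\le c_p\|L\|$, hence the uniform bound $m_z(\psi_j,\varphi_j)\min(1,|\varphi_1(z)-\varphi_2(z)|)\le C\|L\|$. (The paper does not need these first-order test functions; everything comes from the master inequality.) The genuine gap is in Step 3, at the words ``using Jensen's formula to absorb the $\log^{+}(1/|\varphi_1-\varphi_2|)$-term.'' Writing $g:=\varphi_1-\varphi_2$, Jensen's formula only bounds $\frac{1}{2\pi}\int_0^{2\pi}\log\frac{1}{|g(re^{i\theta})|}\,d\theta$ from above (by $-\log|g(0)|$); to bound $\frac{1}{2\pi}\int_0^{2\pi}\log^{+}\frac{1}{|g(re^{i\theta})|}\,d\theta$ you additionally need an $O(r^2)$ bound on $\frac{1}{2\pi}\int_0^{2\pi}\log^{+}|g(re^{i\theta})|\,d\theta$, i.e.\ on the Nevanlinna characteristic of $g$ --- which is exactly the kind of growth control on $\varphi_1-\varphi_2$ you are trying to establish. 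At that stage nothing excludes $g$ from having infinite order, so the claimed $r^2$ growth of the circular means of $u_j$, and with it the whole Riesz-mass/Liouville conclusion, is unjustified for each $j$ separately.

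The paper's fix is to add the two logarithmic inequalities ($j=1,2$) before doing any potential theory. Then $|\varphi_1|^2+|\varphi_2|^2\ge\frac12|g|^2$ puts a term $+\frac{|g|^2}{4}$ on the left, the error $\log(2+|g|^2)$ is absorbed pointwise via $\log(2+x)\le\frac{x}{16}+2$ (no circular averaging needed), and the $\log|g|$ contributions enter with a \emph{positive} sign, so they can be folded into the logarithm of the single nonzero entire function $F:=\psi_1\psi_2(\varphi_1-\varphi_2)^4$. The resulting bound says precisely that $m(f,\varphi)<\infty$ for $f(\zeta)=F(\zeta/\sqrt2)$ and $\varphi(\zeta)=\frac12 g(\zeta/\sqrt2)$, and the single-operator result \cite[Proposition 2.1]{TL-14} then gives that $g$ is affine. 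Only after that does one conclude $m(\psi_j,\varphi_j)<\infty$ (your own Step 2 bound, or the paper's \eqref{eq-1}, now suffices since $\min(1,|g|)$, resp.\ $\rho(\varphi_1,\varphi_2)$, is bounded below for $|z|$ large and $m_z$ is continuous), and applies \cite[Proposition 2.1]{TL-14} once more to each clean pair $(\psi_j,\varphi_j)$ to get $\varphi_j(z)=a_jz+b_j$ with $|a_j|\le1$. If you reorganize your Step 3 along these lines (sum first, absorb pointwise, then quote or prove the single-symbol lemma twice), your argument closes; your concluding Liouville argument for $|a_j|\le1$ is fine.
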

\begin{proof}
By Lemma \ref{Fp} and $\|k_w\|_p = 1$ for every $w \in \CC$, we see that for all $w, z \in \CC$,
\begin{eqnarray*}
\|L\| &\geq& \|W_{\psi_1, \varphi_1}k_w - W_{\psi_2, \varphi_2}k_w\|_{q} \\
&\geq& \left|\psi_1(z) e^{\overline{w}\varphi_1(z)-\frac{|w|^2}{2}} - \psi_2(z) e^{\overline{w}\varphi_2(z)-\frac{|w|^2}{2}} \right| e^{-\frac{|z|^2}{2}}\\
&=& \left|\overline{\psi_1}(z) e^{w\overline{\varphi_1}(z)} - \overline{\psi_2}(z) e^{w\overline{\varphi_2}(z)} \right| e^{-\frac{|z|^2+|w|^2}{2}}\\
&=& \left|\overline{\psi_1}(z) K_{\overline{\varphi_1}(z)}(w) - \overline{\psi_2}(z) K_{\overline{\varphi_2}(z)}(w) \right| e^{-\frac{|z|^2+|w|^2}{2}}.
\end{eqnarray*}
Taking supremum over $w \in \mathbb{C}$, we get
\begin{equation*}
\|L\| \geq e^{-\frac{|z|^2}{2}} \|\overline{\psi_1}(z) K_{\overline{\varphi_1}(z)} - \overline{\psi_2}(z) K_{\overline{\varphi_2}(z)} \|_{\infty}, \forall z \in \CC.
\end{equation*}
Applying Lemma \ref{lem-est} to 
$$
\alpha_1 = \overline{\psi_1}(z), \alpha_2 = - \overline{\psi_2}(z), w_1 = \overline{\varphi_1}(z), w_2 = \overline{\varphi_2}(z),
$$
we have 
\begin{eqnarray*}
\|L\| &\geq&  e^{-\frac{|z|^2}{2}} \rho(\overline{\varphi_1}(z), \overline{\varphi_2}(z))  \left( |\overline{\psi_1}(z)| \|K_{\overline{\varphi_1}(z)}\|_{\infty} + |\overline{\psi_2}(z)| \|K_{\overline{\varphi_2}(z)}\|_{\infty} \right) \\ 
& = & e^{-\frac{|z|^2}{2}}  \rho(\varphi_1(z), \varphi_2(z)) \left( |\psi_1(z)| e^{\frac{|\varphi_1(z)|^2}{2}}  + |\psi_2(z)| e^{\frac{|\varphi_2(z)|^2}{2}} \right)  \\ 
& = & \frac{|\varphi_1(z) - \varphi_2(z)|^2}{2(2 + |\varphi_1(z) - \varphi_2(z)|^2)} \left( m_z(\psi_1, \varphi_1) + m_z(\psi_2, \varphi_2) \right), \forall z \in \CC.
\end{eqnarray*}
This implies that, for all $z \in \mathbb C$,
\begin{equation} \label{eq-1} 
\|L\| \geq \frac{|\varphi_1(z) - \varphi_2(z)|^2}{2(2 + |\varphi_1(z) - \varphi_2(z)|^2)} m_z(\psi_j, \varphi_j) \text{ with } j = 1, 2.
\end{equation}
Taking logarithm both sides of this inequality, we get
\begin{align*}
\log|\psi_1(z)| & + 2\log|\varphi_1(z) - \varphi_2(z)| + \frac{|\varphi_1(z)|^2 - |z|^2}{2}   \\
 & \leq \log (2 + |\varphi_1(z) - \varphi_2(z)|^2) + \log 2\|L\|,
\end{align*}
and
\begin{align*}
\log|\psi_2(z)| & + 2\log|\varphi_1(z) - \varphi_2(z)| + \frac{|\varphi_2(z)|^2 - |z|^2}{2}  \\
 & \leq \log (2 + |\varphi_1(z) - \varphi_2(z)|^2) + \log 2\|L\|,
\end{align*}
for all $z \in \CC$. Then
\begin{align*}
\log|\psi_1(z)\psi_2(z)| &+ 4\log|\varphi_1(z) - \varphi_2(z)| + \frac{|\varphi_1(z)|^2 + |\varphi_2(z)|^2}{2} - |z|^2\\
& \leq 2\log (2 + |\varphi_1(z) - \varphi_2(z)|^2) + 2\log 2\|L\|, \ \forall z \in \CC.
\end{align*}
Since $\log (2 + x) \leq \frac{x}{16} + 2$ for all $x \geq 0$, 
\begin{align*}
\log|\psi_1(z)\psi_2(z)| &+ 4\log|\varphi_1(z) - \varphi_2(z)| + \frac{|\varphi_1(z) - \varphi_2(z)|^2}{4} - |z|^2 \\
& \leq \frac{|\varphi_1(z) - \varphi_2(z)|^2}{8} + 4 + 2 \log 2 \|L\|, \ \ \forall z \in \CC.
\end{align*}
Thus, for all $z \in \CC$,
\begin{align*}
\log|\psi_1(z)\psi_2(z)(\varphi_1(z) - \varphi_2(z))^4| +  \frac{|\varphi_1(z) - \varphi_2(z)|^2}{8} & - |z|^2 \\
& \leq 4 + 2 \log 2 \|L\|,
\end{align*}
i.e.,
\begin{align*}
\log|\psi_1(z)\psi_2(z)(\varphi_1(z) - \varphi_2(z))^4|^2 + \frac{|\varphi_1(z) - \varphi_2(z)|^2}{4} & - |\sqrt{2}z|^2 \\
& \leq 8 + 4 \log 2 \|L\|.
\end{align*}

Using this and applying \cite[Proposition 2.1]{TL-14} to entire functions 
$$
f(\zeta): = \psi_1(\zeta /\sqrt{2})\psi_2(\zeta /\sqrt{2}) \left(\varphi_1(\zeta /\sqrt{2}) - \varphi_2(\zeta /\sqrt{2})\right)^4
$$
and
$$
\varphi(\zeta): = \frac{\varphi_1(\zeta /\sqrt{2}) - \varphi_2(\zeta /\sqrt{2})}{2},
$$
we can conclude that $\varphi_1(z) - \varphi_2(z) = az +b$ for some $a,b \in \CC$. Then
$$
\lim_{|z| \to \infty} \frac{|\varphi_1(z) - \varphi_2(z)|^2}{2 + |\varphi_1(z) - \varphi_2(z)|^2} = 
\begin{cases}
1, \text{ if } a \neq 0, \\
\frac{|b|^2}{2 + |b|^2} > 0, \text{ if } a = 0. 
\end{cases}
$$
This and \eqref{eq-1} imply that $m(\psi_1, \varphi_1) < \infty$ and $m(\psi_2, \varphi_2) < \infty$. Then, by \cite[Proposition~2.1]{TL-14}, $\varphi_1(z) = a_1 z + b_1$, $\varphi_2(z) = a_2 z + b_2$ with $|a_1| \leq 1$ and $|a_2| \leq 1$.
\end{proof}

We are now ready to prove Theorems \ref{thm-m1} and \ref{thm-m2}.

\medskip

\textbf{Proof of Theorem \ref{thm-m1}}. The sufficient part of statements (a) and (b) is obvious. Moreover, the necessary part of (b) is an immediate consequence of \cite[Theorem~4.1]{TK-17} and (a). Then it is enough to prove the necessity of (a). 
Suppose that the difference $L:= W_{\psi_1, \varphi_1} - W_{\psi_2, \varphi_2}$ is bounded from $\calF^p$ into $\calF^q$. Then, by Proposition \ref{prop-m}, $m(\psi_j, \varphi_j) < \infty$ and $\varphi_j(z) = a_j z + b_j$ with $|a_j| \leq 1$ and $j = 1, 2$.
We consider the following two cases.

\textbf{Case 1.} Either of $a_1$ or $a_2$ is nonzero. 
If say $a_1 \neq 0$ (it is similar for the case $a_2 \neq 0$), then $\psi_1 \in \calF^q$. Indeed, for some $C>0$ and $\alpha \in (0, |a_1|)$, 
$$
|\psi_1(z)| \leq m(\psi_1, \varphi_1) e^{\frac{|z|^2 - |a_1 z + b_1|^2}{2}} \leq C e^{\frac{(1-\alpha^2)|z|^2}{2}}, \ \forall z \in \CC.
$$
It implies that $\psi_1 \in \calF^q$. 
By Theorem \ref{thm-swco}, $W_{\psi_1, \varphi_1}$ is bounded from $\calF^p$ into $\calF^q$, and hence, so is $W_{\psi_2, \varphi_2}$.

\textbf{Case 2.} Now suppose $a_1 = a_2 = 0$, i.e., $\varphi_1(z) \equiv b_1$ and $\varphi_2(z) \equiv b_2$ with $b_1 \neq b_2$. Then
$$
L(e_0) = \psi_1 - \psi_2 \in \calF^q \text{ and } L(e_1) = b_1 \psi_1 - b_2 \psi_2 \in \calF^q,
$$
where $e_0: =z^0$ and $e_1: = z^1$. Thus,
$$
\psi_1 = \frac{b_2 L(e_0) - L(e_1)}{b_2 - b_1} \in \calF^q  \text{ and }
\psi_2 = \frac{b_1 L(e_0) - L(e_1)}{b_2 - b_1} \in \calF^q.
$$
It remains to use Theorem \ref{thm-swco} to conclude that $W_{\psi_1, \varphi_1}$ and  $W_{\psi_2, \varphi_2}$ are bounded from $\calF^p$ into $\calF^q$.
\begin{flushright}
$\square$
\end{flushright}

\medskip

\textbf{Proof of Theorem \ref{thm-m2}}. Obviously (iii) $\Longrightarrow$ (ii) $\Longrightarrow$ (i). We prove that (i) $\Longrightarrow$ (iii).

Suppose the linear combination $L:= c_1 C_{\varphi_1} + c_2 C_{\varphi_2}$ is bounded from $\calF^p$ into $\calF^q$.
By Proposition \ref{prop-m}, $m(c_j, \varphi_j) < \infty$ and, in this case, $\varphi_j(z) = a_j z + b_j$ with $|a_j| \leq 1$ and $b_j = 0$ whenever $|a_j|=1$ with $j = 1, 2$.

We show by contradiction that $|a_1| < 1$ and $|a_2| < 1$, and then, by Theorem \ref{thm-swco}, both $C_{\varphi_1}$ and $C_{\varphi_2}: \calF^p \to \calF^q$ are compact.

Indeed, assume that, for instance, $|a_1| = 1$. Then, by Theorem \ref{thm-swco}, $C_{\varphi_1}: \calF^p \to \calF^q$ is unbounded. Then, $C_{\varphi_2}: \calF^p \to \calF^q$ is unbounded, too. By Theorem \ref{thm-swco} again, $|a_2|$ must be $1$. Thus, in this case $\varphi_j(z) = a_j z$ with $|a_j| = 1, j = 1, 2$ and $a_1 \neq a_2$.

Since $L: \calF^p \to \calF^q$ is bounded, for every $n \in \mathbb N$,
\begin{align*}
\|L\| \|z^n\|_p \geq \|Lz^n\|_q & = \left( \dfrac{q}{2\pi} \int_{\CC} |c_1 a_1^n z^n + c_2 a_2^n z^n |^q e^{-\frac{q|z|^2}{2}}dA(z) \right)^{\frac{1}{q}} \\
& = |c_1 a_1^n + c_2 a_2^n| \|z^n\|_q.
\end{align*}
On the other hand, 
$$
\|z^n\|_p = \left( \dfrac{2}{p} \right)^{\frac{n}{2}} \Gamma^{\frac{1}{p}}\left( \frac{np}{2} + 1 \right) \sim \left( \dfrac{n}{e} \right)^{\frac{n}{2}} (\pi p n)^{\frac{1}{2p}} \text{ as } n \to \infty,
$$
and, similarly, 
$$
\|z^n\|_q = \left( \dfrac{2}{q} \right)^{\frac{n}{2}} \Gamma^{\frac{1}{q}}\left( \frac{nq}{2} + 1 \right) \sim \left( \dfrac{n}{e} \right)^{\frac{n}{2}} (\pi q n)^{\frac{1}{2q}} \text{ as } n \to \infty.
$$
Therefore, $c_1 a_1^n + c_2 a_2^n \to 0$ as $n \to \infty$. We show that it is a contradiction. There are two cases.

\textbf{Case 1.} If $|c_1| \neq |c_2|$, then
$$
0 < \left| |c_1| - |c_2| \right| \leq |c_1 a_1^n + c_2 a_2^n| \to 0 \text{ as } n \to \infty,
$$
which is impossible. 

\textbf{Case 2.} Suppose now $|c_1| = |c_2|$ and put
$$
c_j = r e^{i\omega_j} \text{ and } a_j =  e^{i\theta_j} \text{ with } \omega_j, \theta_j \in [0,2\pi), j = 1, 2 \text{ and } \theta_1 \neq \theta_2.
$$
Then
\begin{align*}
\left| c_1 a_1^n + c_2 a_2^n \right| &= r \left| e^{i (\omega_1 + n \theta_1)} + e^{i (\omega_2 + n \theta_2)} \right|\\
 &= r  \left| 1 + e^{i ((\omega_2 - \omega_1) + n (\theta_2 - \theta_1))} \right| \\
 &= 2 r \left| \cos \frac{(\omega_2 - \omega_1) + n (\theta_2 - \theta_1)}{2} \right|. 
\end{align*}
Consequently,
$$
\cos \frac{(\omega_2 - \omega_1) + n (\theta_2 - \theta_1)}{2}  \to 0 \text{ as } n \to \infty,
$$
which is a contradiction.
\begin{flushright}
$\square$
\end{flushright}

\medskip

Finally, we prove Theorem \ref{thm-ess}.

\textbf{Proof of Theorem \ref{thm-ess}.}
Obviously, 
$$
\|L\|_e \leq \|W_{\psi_1, \varphi_1}\|_e + \|W_{\psi_2, \varphi_2}\|_e.
$$
Then the upper estimate of $\|L\|_e$ directly follows from \cite[Theorem~3.7]{TK-17}.

For the lower estimate, let $T$ be a compact operator from $\calF^p$ into $\mathcal F^q$. By Lemma \ref{Fp} and  $\|k_w\|_p =1$, for every $w \in \CC$, we see that for every $z \in \CC$,
\begin{align*}
\|L-T\| &\geq \|W_{\psi_1, \varphi_1}k_w - W_{\psi_2, \varphi_2} k_w - Tk_w\|_q \\
& \geq \|W_{\psi_1, \varphi_1}k_w - W_{\psi_2, \varphi_2} k_w\|_q - \|Tk_w\|_q \\
& \geq \left| \psi_1(z) e^{\overline{w}\varphi_1(z) -\frac{|w|^2}{2} } - \psi_2(z) e^{\overline{w}\varphi_2(z) - \frac{|w|^2}{2}} \right| e^{-\frac{|z|^2}{2}} - \|Tk_w\|_q.
\end{align*}
In particular, with $w = \varphi_1(z)$ the last inequality gives
\begin{align*}
\|L-T\| & \geq \left| \psi_1(z) e^{\frac{|\varphi_1(z)|^2 - |z|^2}{2}} - \psi_2(z) e^{\overline{\varphi_1(z)}\varphi_2(z)-\frac{|z|^2+|\varphi_1(z)|^2}{2}} \right| - \|Tk_{\varphi_1(z)}\|_q\\ 
& \geq \left| m_z(\psi_1, \varphi_1) - m_z(\psi_2,\varphi_2) e^{-\frac{|\varphi_1(z)-\varphi_2(z)|^2}{2}} \right| - \|Tk_{\varphi_1(z)}\|_q, \forall z \in \CC.
\end{align*}
Similarly, with $w = \varphi_2(z)$ we get
\begin{align*}
\|L-T\| & \geq \left| m_z(\psi_2, \varphi_2) - m_z(\psi_1,\varphi_1) e^{-\frac{|\varphi_2(z)-\varphi_1(z)|^2}{2}} \right| - \|Tk_{\varphi_2(z)}\|_q, \forall z \in \CC.
\end{align*}
Thus, for every $z \in \CC$,
\begin{align*}
 2\|L-T\|  \geq & \left( m_z(\psi_1,\varphi_1) +  m_z(\psi_2,\varphi_2) \right) \left( 1 - e^{-\frac{|\varphi_1(z)-\varphi_2(z)|^2}{2}} \right) \\
& \qquad \qquad \qquad \qquad \qquad \quad  - \|Tk_{\varphi_1(z)}\|_q - \|Tk_{\varphi_2(z)}\|_q \\
 \geq & 2 \rho(\varphi_1(z),\varphi_2(z)) \left( m_z(\psi_1,\varphi_1) +  m_z(\psi_2,\varphi_2) \right) \\
& \qquad \qquad \qquad \qquad \qquad \quad  - \|Tk_{\varphi_1(z)}\|_q - \|Tk_{\varphi_2(z)}\|_q.
\end{align*}
Since $a_1 \neq 0$ and $a_2 \neq 0$, $\varphi_1(z) \to \infty$ and $\varphi_2(z) \to \infty$ as $|z| \to \infty$. Then, by Lemma \ref{lem-com}, $Tk_{\varphi_1(z)} \to 0$ and $Tk_{\varphi_2(z)} \to 0$ in $\mathcal F^q$ as $|z| \to \infty$.

Using this and letting $|z| \to \infty$, we get
\begin{align*}
\|L-T\| \geq \limsup_{|z| \to \infty} \left( m_z(\psi_1,\varphi_1) +  m_z(\psi_2,\varphi_2) \right) \rho(\varphi_1(z),\varphi_2(z)).
\end{align*}
 From this the lower estimate for $\|L\|_e$ follows. 
\begin{flushright}
$\square$
\end{flushright}

\begin{rem}
The method in this paper works for differences (and hence for sums) of only two weighted composition operators on Fock spaces and it cannot apply to more than two such operators. 

A natural question to ask is: how's about a sum of $n \geq 3$ weighted composition operators on Fock spaces? To solve this question a new approach is supposed to develop. So far as we know, there is not yet any correct solution to this case, even for linear combinations of composition operators on Fock spaces.
\end{rem}

\bigskip
 
\textbf{Acknowledgement.} The article was completed during the first-named author's stay at Division of Mathematical Sciences, School of Physical and Mathematical Sciences, Nanyang Technological University, as a postdoc under the MOE's AcRF Tier 1 M4011724.110 (RG128/16). He would like to thank the institution for hospitality and support.

\bigskip

\end{document}